\title{Dynamics of small solutions in KdV type equations: decay inside the linearly dominated region}
\author{Claudio Mu\~noz}
\thanks{Partially supported by Fondecyt no. 1150202, Millennium Nucleus Center for Analysis of PDE NC130017, Fondo Basal CMM, and MathAmSud EEQUADD collaboration Math16-01. Part of this work was done while the author was visiting Fields Institute (Toronto, Canada), as part of the ``Focus Program on Nonlinear Dispersive Partial Differential Equations and Inverse Scattering''.}
\address{CNRS and Departamento de Ingenier\'ia Matem\'atica DIM-CMM UMI 2807-CNRS \\ Universidad de Chile, Santiago, Chile}
\email{cmunoz@dim.uchile.cl}
\date{\today}
\subjclass[2000]{Primary 37K15, 35Q53; Secondary 35Q51, 37K10}
\keywords{KdV equation, scattering, decay estimates, Virial}
\chardef\bslash=`\\ 
\newtheorem{thm}{Theorem}[section]
\newtheorem{cor}[thm]{Corollary}
\newtheorem{lem}[thm]{Lemma}
\theoremstyle{remark}
\newtheorem{rem}{Remark}[section]
\numberwithin{equation}{section}
\newcommand{\R}{\mathbb{R}}
\newcommand{\la}{\lambda}
\newcommand{\al}{\alpha}
\newcommand{\bt}{\beta}
\newcommand{\ga}{\gamma}
\newcommand{\sech}{\operatorname{sech}}
\newcommand{\be}{\begin{equation}}
\newcommand{\ee}{\end{equation}}
\newcommand{\bp}{\begin{proof}}
\newcommand{\ep}{\end{proof}}
\newcommand{\bel}{\begin{equation}\label}
\newcommand{\eeq}{\end{equation}}
\newcommand{\bea}{\begin{eqnarray}}
\newcommand{\eea}{\end{eqnarray}}
\newcommand{\bee}{\begin{eqnarray*}}
\newcommand{\eee}{\end{eqnarray*}}
\newcommand{\ben}{\begin{enumerate}}
\newcommand{\een}{\end{enumerate}}
 \providecommand{\abs}[1]{\lvert#1 \rvert}
\newcommand{\ve}{\varepsilon}
\newcommand{\eval}[2][\right]{\relax
  \ifx#1\right\relax \left.\fi#2#1\rvert}
\let\abs=\envert
\begin{document}
\begin{abstract}
In this paper we prove that all small, uniformly in time $L^1\cap H^1$ bounded solutions to KdV and related quadratic perturbations must converge to zero, as time goes to infinity,  locally in an increasing-in-time region of space of order $t^{1/2}$ around any compact set in space. This set is included in the linearly dominated dispersive region $x\ll t$.  Moreover, we prove this result independently of the well-known supercritical character of KdV scattering. In particular, no standing breather-like nor solitary wave structures exists in this particular regime. For the proof, we make use of well-chosen weighted virial identities. The main new idea employed here with respect to previous results is the fact that the $L^1$ integral is subcritical with respect to the KdV scaling.
\end{abstract}
\maketitle \markboth{KdV decay estimates}{C. Mu\~noz}
\renewcommand{\sectionmark}[1]{}

\section{Introduction and main results}

\subsection{Description of the problem} This note is concerned with the generalized Korteweg-de Vries equation (gKdV) posed in the real line $\R$:
\be\label{gKdV} 
\begin{aligned}
 \partial_t u + \partial_{x}(\partial_x^2 u +f(u)) = 0,\qquad (t,x) \in & ~ \R\times \R.
\end{aligned}
\ee
Here and along this paper $u=u(t,x) \in \R$ is a real-valued function. We will assume that $f=f(s)$ is a focusing\footnote{The case of defocusing nonlinearities is in some sense simpler and can be obtained from the ideas in this paper with somehow easier proofs.}, polynomial-type nonlinearity, in the sense that for $p=1,2,3,\ldots$ and $f_1:\R \to \R$ of class $C^1$,
\be\label{f(s)}
f(s) = u^p + f_1(s), \quad \lim_{s\to 0} \frac{f_1(s)}{|s|^p} =0.
\ee
Three interesting cases for $f$ as above are 
\ben
\item $p=2$ and $f_1(s)=0$ (the original Korteweg-de Vries (KdV) equation), 
\item $p=2$ and $f_1(s) = \mu s^3$, $\mu \neq 0$ real-valued (the Gardner equation), and finally, 
\item $p=3$ and $f_1(s)=0$ (the modified KdV equation, or mKdV). 
\een
These are well-known integrable models \cite{AS}, and possibly the only gKdV integrable equations, under reasonable hypotheses on the nonlinearity. A rigorous  ``classification result'' involving these nonlinearities and the remaining nonintegrable class can be found e.g. in \cite{Munoz}, generalization of previous and foundational work by Martel and Merle \cite{MMcol} for $p=4$ and $f_1(s)=0$. 

\medskip

For the simpler pure power case $f_1(s) =0$, the Cauchy problem for \eqref{gKdV} is globally well-posed in $H^1(\R)$ for $p=2,3$ and $4$, with uniform bounds in time for $\|u(t)\|_{H^1}$ thanks to the conservation laws, see Kenig-Ponce-Vega \cite{KPV1}. On the other hand, \eqref{gKdV} with $f_1(s)\neq 0$ is globally well-posed if e.g. the initial data is sufficiently small. The Gardner equation is $H^1$ globally well-posed independently of the size of the initial data \cite{KPV1}.

\medskip

In this paper we are interested in decay properties of solutions to \eqref{gKdV}, specially in the cases where there is long range scattering (i.e., $p=2, 3$), and decay rates for the linear dynamics combined with small powers $p$ are too weak to close standard arguments related to scattering methods. In some sense to be explained later, we want to prove \emph{decay} in a regime where modified scattering is naturally present, and solitons/solitary waves preclude standard scattering in the energy space.

\medskip

First of all, it is important to have in mind that some solutions to \eqref{gKdV} do not necessarily decay. For instance, it is well-known that  \eqref{gKdV} may have soliton (or solitary wave) solutions of the form
\be\label{Soliton}
u(t,x)= Q_c(x-ct),  \quad c>0,
\ee
where $Q_c$ is a stationary solution to the ODE $Q_c''-cQ_c +f(Q_c)=0$, $Q_c\in H^1(\R)$. Additionally, it is well-known that both mKdV and Gardner models do have stable breather solutions \cite{AS,AM,AM1,AM2,Alejo}, that is to say, \emph{localized in space solutions which are also periodic in time}, up to the symmetries of the equation. An example of these type of solutions is the mKdV breather: for any $\al,\bt>0$, 
\be\label{breather}
\begin{aligned}
B(t,x) :=  ~2\sqrt{2} \partial_x \arctan \left(\frac{\beta  \sin(\al (x+\delta t))}{\al \cosh(\beta (x+\ga t))} \right),\quad \delta=  \al^2 -3\bt ^2, ~\gamma= 3\al^2 -\bt ^2,
\end{aligned}
\ee
is a solution of mKdV with nontrivial time-periodic behavior, up to the translation symmetries of the equation. Therefore, generalized KdV equations may have both solitary waves and breathers as well, and both classes of solutions do not decay.

\medskip

Let us review some results concerning decay of small solutions for gKdV equations. Kenig-Ponce-Vega \cite{KPV1} showed scattering for small data solutions of the $L^2$ critical gKdV equation ($p=5$ and $f_1=0$). Christ and Weinstein \cite{CW} showed that for the case $f(s)= |s|^p$, $p>\frac14(23-\sqrt{57}) \sim 3.86$, small data solutions in $L^{1,1}\cap L^{2,2}$ lead to decay, with rate $t^{-1/3}$ (i.e. linear rate of decay). Hayashi and Naumkin \cite{HN1,HN2} studied the case $p>3$, obtaining decay estimates and asymptotic profiles for small data in $H^{1,1}$. Tao \cite{Tao} considered the scattering of data for the quartic KdV in the space $H^1 \cap \dot H^{-1/6}$ around the zero and the soliton solution. The finite energy condition was then removed by Koch and Marzuola in \cite{KM} by using $U-V$ spaces. C\^ote \cite{Cote} constructed solutions to the subcritical gKdV equations with a given asymptotical behavior, for $p=4$ and $p=5$.

\medskip

The case $p=3$ is critical in terms of scattering, and also critical with respect to the $L^1$ norm. Therefore, it is expected that small solutions do scatter following modified linear profiles. In \cite{GPR}, Germain, Pusateri and Rousset deal with the mKdV case ($p=3$) around the zero background and the soliton. By using Fourier techniques and estimates on space-time resonances, they were able to tackle down this ``critical'' case in terms of modified scattering. Using different techniques, related to Hardy type estimates, Kenig, Ponce and Vega \cite{KPV2} showed that solutions of gKdV decaying sufficiently fast must be identically zero. Finally, Isaza, Linares and Ponce \cite{ILP} showed bounds on the spatial decay estimates on \eqref{gKdV} with $p=2$ in sufficiently strong weighted Sobolev spaces. However, we recall that no scattering results as above mentioned seems to hold for the quadratic power ($p=2$), which can be considered as ``supercritical'' in terms of modified scattering, since the nonlinear term behaves (under the linear Airy decay dynamics $\sim t^{-1/3}$) as $\partial_x u u \sim \frac{1}{t^{2/3}} u$, and can be regarded as linear term with a potential which is clearly not integrable in time.

\subsection{Main results} In this note we precisely consider decay in KdV-like equations ($p=2$ and $f_1\neq 0$). We state our first result. Let $C>0$ be an arbitrary constant, and let $I(t)$ be the time-depending interval 
\be\label{Interval}
I(t):= \left( - \frac{C |t|^{1/2}}{\log |t|} , \frac{C |t|^{1/2}}{\log |t|} \right),  \quad |t|\geq 2.
\ee
Clearly $I(t)$ contains any compact interval for $t$ sufficiently large.
 
\begin{thm}[KdV case]\label{TH1}
Assume $p=2$ and $f_1(s)=0$ in \eqref{f(s)}. Suppose that $u=u(t)$ is a solution to \eqref{gKdV} which is in $C(\R,H^1) \cap L^\infty(\R ; L^1)$, and there exists $\ve>0$ such that 
\be\label{Smallness0}
\sup_{t\in \R}\|u(t)\|_{H^1} <\ve.
\ee
Then, one has
\be\label{AS}
\lim_{t\to \pm\infty}\|u(t) \|_{ H^1(I(t))} =0.
\ee
Additionally, the following Kato-smoothing estimate holds: for any $c_0>0$,
\be\label{Integra50}
\int_2^\infty \!\! \int e^{-c_0|x|} (u^2+ (\partial_x u)^2 + (\partial_x^2 u)^2)(t,x)dx dt <+\infty.
\ee
Moreover, no small soliton nor breather solution exists for KdV inside the region $I(t)$, for any time $t$ sufficiently large.
\end{thm}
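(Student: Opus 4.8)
The plan is to establish the three conclusions in order: first the Kato-smoothing bound \eqref{Integra50}, then the local decay \eqref{AS}, and finally the nonexistence statement as a direct consequence of \eqref{AS}. The whole argument is driven by weighted virial identities. For \eqref{Integra50} I would fix a bounded, strictly increasing weight $\phi$ with $\phi'(x)=\sech^2(x/A)$, so that $\phi'>0$ and $|\phi'''|\lesssim A^{-2}\phi'$, and study $\mathcal I(t)=\int\phi\,u^2$. Using \eqref{gKdV} with $p=2$, $f_1=0$, a direct integration by parts gives
\be
\frac{d}{dt}\mathcal I(t)=-3\int\phi'(\partial_x u)^2+\int\phi'''u^2+\frac43\int\phi'u^3.
\ee
Choosing $A$ large absorbs the $\phi'''$ term into the first (negative) term, while the smallness \eqref{Smallness0} together with $\|u\|_{L^\infty}\lesssim\|u\|_{H^1}<\ve$ controls the cubic term. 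Since $0\le\mathcal I(t)\le\|\phi\|_{L^\infty}\|u(t)\|_{L^2}^2$ is bounded uniformly in time, integrating on $[2,\infty)$ yields $\int_2^\infty\!\!\int\phi'(\partial_x u)^2\,dx\,dt<\infty$, which is the $(\partial_x u)^2$ piece of \eqref{Integra50}, since $e^{-c_0|x|}\lesssim\phi'$ once $A$ is taken large.

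To reach the $u^2$ and $(\partial_x^2 u)^2$ pieces, I would iterate the scheme on differentiated quantities: $w:=\partial_x u$ solves a KdV equation with quadratic and transport corrections, and the virial $\int\phi\,w^2$ produces the good term $-3\int\phi'(\partial_x^2 u)^2$, whose errors are controlled by the already-established smoothing of $(\partial_x u)^2$ and by smallness. The $u^2$ contribution is the delicate one: it sits at low frequencies, where $L^2$ smallness carries no scaling gain. This is exactly where the hypothesis $u\in L^\infty(\R;L^1)$ enters: because the $L^1$ norm is subcritical for the KdV scaling $u\mapsto\lambda^2 u(\lambda^3\cdot,\lambda\cdot)$, it supplies the missing large-scale decay and closes the low-frequency error terms, giving the full estimate \eqref{Integra50}.

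For the local decay \eqref{AS} I would first note that \eqref{Integra50} forces $\int\phi'(u^2+(\partial_x u)^2)(t_n)\to0$ along some sequence $t_n\to\infty$; the uniform-in-time $H^1$ bound and the equation make $t\mapsto\int\phi'(u^2+(\partial_x u)^2)$ uniformly continuous, which upgrades this $\liminf$ to a genuine limit on any fixed compact set. To pass from a fixed compact region to the expanding interval $I(t)$, I would repeat the virial computation with a moving weight $\phi(x/\lambda(t))$, $\lambda(t)\sim|t|^{1/2}/\log|t|$, matching \eqref{Interval}. The expansion generates a transport term of size $(\dot\lambda/\lambda)\int (x/\lambda)\,\phi'(x/\lambda)\,u^2$; the rate $\dot\lambda/\lambda\sim 1/t$ is borderline non-integrable in time, and the logarithmic correction in $\lambda$ is tuned precisely to tame this pairing against the (now small) local mass — this is where the subcritical $L^1$ gain is decisive — while keeping $I(t)\subset\{x\ll t\}$. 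Feeding the smoothing bound \eqref{Integra50} back into this identity then yields $\|u(t)\|_{H^1(I(t))}\to0$, which is \eqref{AS}.

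The nonexistence statement follows at once: a soliton $Q_c(x-ct)$ or an mKdV/Gardner-type breather carries a fixed positive amount of local $H^1$ mass, so if such a structure remained inside $I(t)$ for all large $t$ it would keep $\|u(t)\|_{H^1(I(t))}$ bounded below, contradicting \eqref{AS}. The main obstacle throughout is the borderline, supercritical-for-scattering character of the quadratic term $\partial_x(u^2)$, which under the linear Airy rate $t^{-1/3}$ behaves like a non-time-integrable potential: the crux is the bookkeeping showing that the combination of $L^2$ smallness, the subcritical $L^1$ gain, and the slow $\log$-corrected expansion of $I(t)$ is exactly enough to dominate every virial error term — in particular to reconcile the moving-weight transport term with the cubic virial contribution.
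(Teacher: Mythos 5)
Your proposal has a genuine gap, and it sits at exactly the point the paper is designed to overcome. In your opening identity
\[
\frac{d}{dt}\int\phi\,u^2 \;=\; -3\int\phi'(\partial_xu)^2+\int\phi'''u^2+\frac43\int\phi'u^3,
\]
the good (negative) term controls $(\partial_x u)^2$, while both error terms are \emph{zeroth order}: $\int\phi'''u^2$ involves $u^2$, and $|\int\phi'u^3|\lesssim\ve\int\phi'u^2$ is again a $u^2$ quantity. Taking $A$ large gives $|\phi'''|\le A^{-2}\phi'$, but $A^{-2}\int\phi'u^2$ cannot be absorbed into $-3\int\phi'(\partial_xu)^2$: there is no Poincar\'e inequality on the line, and local smallness of $u$ in $L^2$ is precisely what you do not yet know. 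Integrating in time you only obtain
\[
\int_2^\infty\!\!\int\phi'(\partial_xu)^2\,dx\,dt\;\lesssim\;1+\int_2^\infty\!\!\int\phi'u^2\,dx\,dt,
\]
and the right-hand side is exactly the unknown quantity, a priori infinite. Your later remark that the $L^1$ hypothesis ``supplies the missing large-scale decay'' does not repair this: scaling subcriticality of $L^1$ is a heuristic, not an estimate, and nowhere in the proposal is the $L^1$ bound actually converted into control of the low-frequency term. The same objection applies to your second iterate (the virial for $w=\partial_x u$), whose errors again require the local $L^2$ integrability of $u$ itself.

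The missing idea --- the paper's key new ingredient --- is a virial that is \emph{linear} in $u$: with $\la(t)=t^{1/2}/\log t$ and $\psi=\tanh$, one sets $\mathcal I(t)=\int\psi\big(x/\la(t)\big)\,u(t,x)\,dx$. This functional is bounded uniformly in time precisely because $u\in L^\infty(\R;L^1)$ (this, and only this, is where the $L^1$ hypothesis enters), and by Lemma \ref{dtI0} its derivative is
\[
\frac{d}{dt}\mathcal I(t)\;=\;\frac1{\la(t)}\int\psi'\Big(\frac{x}{\la(t)}\Big)u^2\,dx\;+\;(\text{errors linear in } u),
\]
so the local $L^2$ mass appears \emph{with a favorable sign} (quadratic focusing nonlinearity, $\psi'>0$), and the remaining errors are time-integrable after Cauchy--Schwarz because $(\la')^2\sim 1/(t\log^2t)$ --- this is the true role of the logarithmic correction. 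Integrating yields the averaged bound \eqref{Integra0}; only then do the quadratic virial $\mathcal J$ (giving \eqref{Integra4}), the energy virial $\mathcal K$ (giving \eqref{Integra5}), and, freezing $\la$ to a constant, the estimate \eqref{Integra50} close, since their $u^2$-type errors are by then known to be integrable. A second, smaller flaw: upgrading the sequential limits to \eqref{AS} by ``uniform continuity'' is not valid, since $f(t_n)\to0$ plus uniform continuity of $f$ does not imply $f(t)\to0$. The paper instead chooses weights ($\phi=\sech^6$, $\sech^8$) so that $|\frac{d}{dt}\mathcal J|$ and $|\frac{d}{dt}\mathcal K|$ are dominated by time-integrable quantities; hence $\mathcal J,\mathcal K$ converge as $t\to\infty$, and the sequential limits \eqref{Integra1}, \eqref{AS1} identify the limits as zero. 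Your final step (nonexistence of solitons and breathers inside $I(t)$ as a consequence of \eqref{AS}) is correct and is essentially what the paper asserts.
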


\begin{rem}
Note that Theorem \ref{TH1} requires only small solutions in $H^1$, and $L^1$ control uniform-in-time. This requirement is nonempty: solutions satisfying this condition are e.g. small solitons and multi-solitons (any finite number), see \eqref{Soliton}. Note also that solitons move away (in finite time) from the region $I(t)$ considered in \eqref{AS}, and they do not decay. These assumptions should be compared with the ones required in Kenig, Ponce and Vega \cite{KPV2} to show nonexistence of solitary wave solutions.
\end{rem}

\begin{rem}
Theorem \ref{TH1} can be complement by the following standard fact: all small solutions uniformly bounded in time in $H^1$ satisfy the asymptotic regime \cite{MM2,AM1}
\[
\lim_{t\to +\infty} \|u(t)\|_{H^1(x>v t)} =0,
\]
where $v=v(\|u(t=0)\|_{H^1})$, and $v\to 0^+$ as $\|u(t=0)\|_{H^1}\to 0$. This last convergence result is usually referred as \emph{decay in the soliton region}. Under the regime proposed in Theorem \ref{TH1}, the remaining regions $\left( -\infty, -\frac{C |t|^{1/2}}{\log |t|} \right) \cup \left(  \frac{C |t|^{1/2}}{\log |t|} , v t \right)$ are probably strongly dominated by modified linear decay. Note in addition that linear KdV (i.e. Airy equation) possesses as dispersion relation $\omega(k)=-k^3$, and its group velocity is given by $\omega'(k)=-3k^2\leq 0$. Theorem \ref{TH1} formally shows that when the nonlinearity is turn on, the linear mode $k=0$ decays to zero.
\end{rem}

\begin{rem}
If the initial data $u_0$ satisfies $u_0\in H^1\cap H^{0,1}$ and $u(t) \in L^\infty(\R,H^{0,1})$,\footnote{$H^{0,1}:= \{ u\in L^2 \, : \, xu \in L^2 \}$, and $\|u\|_{H^{0,1}} := \|xu\|_{L^2}$.} then the hypotheses of Theorem \ref{TH1} are satisfied. Indeed, by conservation of mass and energy the corresponding solution $u(t)$ satisfies $\sup_{t\in \R}\|u(t)\|_{L^\infty} \lesssim 1$, and $\int |u(t)| =\int_{|x|<R} |u(t)| + \int_{|x|\geq R} \frac{|x|}{|x|} |u(t)| \lesssim \|u(t)\|_{L^\infty} R + R^{-1/2} \|u(t)\|_{H^{0,1}} \lesssim  \|u(t)\|_{H^{0,1}}^{2/3}$. It is not clear to us whether or not the condition $u_0\in H^1\cap H^{0,1}$ is preserved by the flow uniformly in time. However, note that the slightly restrictive assumption $u_0\in H^2\cap H^{0,1}$ is naturally preserved by the flow, see Escauriaza et. al. \cite[Remark $(f)$]{EKPV}.  
\end{rem}

%

\begin{rem}
The restriction in Theorem \ref{TH1} to the interval $|x| \sim |t|^{\frac12} \log^{-1}|t|$ is probably consequence of the initial assumptions made, and conclusions should be slightly different if the $L^1$ boundedness is not required. We conjecture that decay to zero in the energy space should hold in any compact set. See also the works \cite{AM3,MPP,KMPP,KM} for decay in extended regions of space which profit of internal directions for decay. 
\end{rem}

\begin{rem}[About the mKdV case]
Our result is not valid for the mKdV case $p=3$ because of the existence of standing breathers, namely \eqref{breather} with $\ga=0$, i.e. $\bt =\pm\sqrt{3} \al$. From \cite{AM}, it is well-known that small $H^1$ breathers are characterized by the constraints $\bt$ small (``small mass'') and $\al^2 \bt$ also small (``small energy''). These two conditions are clearly not in contradiction with the additional assumption $\ga=0$. Therefore, even small mKdV $H^1$ breathers may have zero velocity and do not scatter to infinity. For further results on the mKdV equation under initial conditions in weighted spaces, see \cite{HN2,H,GPR}. See also \cite{AM,AM1,AM2} for more properties on mKdV breathers.
\end{rem}

\begin{rem}
Since KdV is completely integrable, Inverse Scattering Transform (IST) methods can be applied to obtain formal asymptotics of the solution, by assuming initial data smooth and rapidly decaying. See Ablowitz and Segur \cite[p. 80]{AS}, Deift et al. \cite{DVZ} and Eckhaus and Schuur \cite{ES} and references therein for more details on these methods. (See also \cite{KMM3} for a detailed description of results for the KdV problem.) In these references, the region $|x|\lesssim t^{1/3}$ and its complement play an important role in terms of describing different dynamics, but under suitable decay assumptions. In this work, the $L^1$ boundedness and small $H^1$ conditions lead to a rigorous proof of decay in the larger region $|x|\lesssim t^{1/2}$, working even for integrable and nonintegrable modifications of KdV. We recall that Theorem \ref{TH1} does not require the ``no soliton'' hypothesis, and it is proven with data only in a subset of the energy space. 
\end{rem}

\begin{rem}
Theorem \ref{TH1} is false if $u$ is assumed complex-valued. Indeed, $u(t,x):= \frac{-6}{(x+ i\ve)^2}$, for any real-valued $\ve\neq 0$, is a solution to KdV and counterexample to Theorem \ref{TH1}.
\end{rem}

An important advantage of the methods of proof is that we can extend Theorem \ref{TH1} to the case of KdV perturbations, including the integrable Gardner equation, provided the initial data is small in the $H^1$ norm.

\begin{thm}[Decay for small Gardner-type solutions]\label{TH2}
Consider $p=2$ and $f_1(s)\neq 0$ in \eqref{f(s)}. Assume that $u=u(t)$ is a solution to \eqref{gKdV} which is in $C(\R,H^1) \cap L^\infty(\R, L^1)$,  and there exists $\ve>0$ such that 
\be\label{Smallness}
\sup_{t\in \R}\|u(t)\|_{H^1} <\ve.
\ee
Then, given the same time-depending interval $I(t)$ as in \eqref{Interval}, one has \eqref{AS} and \eqref{Integra50}. In particular, all Gardner-like equations in \eqref{gKdV} cannot possess small, standing breather-like solutions.
\end{thm}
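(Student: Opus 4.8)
\subsection*{Proof strategy for Theorem \ref{TH2}} The plan is to treat Theorem \ref{TH2} as a small perturbation of Theorem \ref{TH1}: since $p=2$ in both statements, the same weighted virial identities apply, and the only new contributions arise from the lower-order term $f_1$. First I would record how the nonlinearity enters the two basic virial computations behind \eqref{AS} and \eqref{Integra50}. Writing $F(s)=\int_0^s f$, a short integration by parts gives, for any weight $\varphi=\varphi(t,x)$,
\[
\frac{d}{dt}\int \varphi\, u^2 = \int (\varphi_t+\varphi''')\,u^2 - 3\int \varphi'\,(\partial_x u)^2 + 2\int \varphi'\,\bigl(u f(u)-F(u)\bigr).
\]
Taking $\varphi$ time-independent with $\varphi'\sim e^{-c_0|x|}$ produces the favourable term $-3\int\varphi'(\partial_x u)^2$, and the analogous $H^1$-level identity for $\int\varphi\,(\partial_x u)^2$ produces $-3\int\varphi'(\partial_x^2 u)^2$; together with the uniform $H^1$ bound these are what drive the $(\partial_x u)^2$ and $(\partial_x^2 u)^2$ parts of \eqref{Integra50}, while the scaling weight $\varphi(x/\lambda(t))$ with $\lambda(t)\sim|t|^{1/2}\log^{-1}|t|$ drives the local decay \eqref{AS} exactly as for KdV, the subcriticality of the $L^1$ norm entering there verbatim.

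Next I would isolate the perturbative terms. Decomposing $f(u)=u^2+f_1(u)$ and $F=\tfrac13u^3+F_1$, the $L^2$ identity acquires the extra term $2\int\varphi'\,(uf_1(u)-F_1(u))$. The hypothesis $f_1(s)=o(|s|^2)$ furnishes a nondecreasing modulus $\eta$ with $\eta(0^+)=0$ and $|f_1(s)|\le\eta(|s|)\,|s|^2$, whence the pointwise bound $|uf_1(u)-F_1(u)|\lesssim\eta(\|u\|_{L^\infty})\,|u|^3$. Since $\|u\|_{L^\infty}\lesssim\|u\|_{H^1}<\ve$ by Sobolev embedding, this is controlled by $\eta(C\ve)\,\|u\|_{L^\infty}\int\varphi'\,u^2$, i.e. by the very same quantity that bounds the cubic KdV term $\tfrac43\int\varphi'u^3$, carrying an additional small factor $\eta(C\ve)$. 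Moreover $f_1\in C^1$ together with $f_1(s)=o(|s|^2)$ forces $f_1(0)=f_1'(0)=0$, so that $\|f_1'(u)\|_{L^\infty}$ is small; this is exactly what is needed to absorb the new contributions in the $H^1$-level identity.

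With these bounds in hand, every estimate from the proof of Theorem \ref{TH1} closes unchanged, up to replacing the Sobolev threshold $\ve$ by $C\ve$: the favourable smoothing terms dominate all nonlinear terms, the time-integrated virials yield \eqref{Integra50}, and the scaling-weight argument yields \eqref{AS}. Finally, the non-existence of small standing breather-like solutions is immediate from \eqref{AS}: such a solution is spatially localized with zero velocity, hence eventually supported inside the growing region $I(t)$, and time-periodic (up to symmetries), so $\|u(t)\|_{H^1(I(t))}$ would remain bounded away from zero along a sequence $t\to+\infty$, contradicting \eqref{AS}.

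The main obstacle I anticipate is the $H^1$-level identity, where differentiating $\partial_x f(u)$ twice would naïvely produce $f''(u)$, a quantity that need not exist for $f_1$ merely $C^1$. The point to verify carefully is that the offending term $\int\varphi\,f''(u)(\partial_x u)^3$ equals $\int\varphi\,(\partial_x u)^2\,\partial_x\!\bigl(f'(u)\bigr)$, which after one integration by parts becomes $-\int\bigl(\varphi'(\partial_x u)^2+2\varphi\,\partial_x u\,\partial_x^2u\bigr)f'(u)$, so that only $f'(u)=2u+f_1'(u)$ ever appears; the KdV part $2u$ is handled in Theorem \ref{TH1}, and $f_1'(u)$ is small by $f_1'(0)=0$. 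Ensuring that this reduction is uniform across the time-dependent weight $\varphi(\cdot/\lambda(t))$, so that the small factors $\eta(C\ve)$ survive the $t$-integration against the Kato-smoothing measure of \eqref{Integra50}, is the technical heart of the extension.
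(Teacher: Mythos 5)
Your perturbative bookkeeping for $f_1$ at the $L^2$ and energy levels matches what the paper actually does: the bound $|\tfrac23 u^3+uf_1(u)-F_1(u)|\lesssim \ve\, u^2$ is exactly the estimate used after Lemma \ref{dtJ0}, and your worry about $f''(u)$ is moot because the paper's energy functional $\mathcal K$ in \eqref{K} carries the corrections $-\tfrac23u^3-2F_1(u)$, so that after using the equation only $f_1(u)$ and $\partial_x(u^2+f_1(u))=(2u+f_1'(u))\partial_x u$ ever appear. The concluding argument ruling out standing breathers from \eqref{AS} is also the intended one.

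However, there is a genuine gap: you identify ``the two basic virial computations'' as the $L^2$-level identity for $\int\varphi\,u^2$ and the $H^1$-level one, and you omit entirely the first and decisive step of the paper, namely the $L^1$-type functional $\mathcal I(t)=\int\psi\big(\tfrac{x}{\la(t)}\big)u\,dx$ of \eqref{III} (bounded uniformly precisely because $u\in L^\infty_t L^1_x$). Its derivative \eqref{dtI} contains the term $\tfrac{1}{\la(t)}\int\psi'\big(\tfrac{x}{\la(t)}\big)(u^2+f_1(u))$, and Corollary \ref{Integra2} extracts from it the time-integrability of the local $L^2$ norm, \eqref{Integra0}. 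This is the one place where $f_1$ enters \emph{non-perturbatively}: one needs the coercivity $s^2+f_1(s)\ge\tfrac12 s^2$ for $|s|$ small, which uses the sign of the quadratic term together with \eqref{Smallness} — and this is precisely the new ingredient of Theorem \ref{TH2} that your proposal never supplies. Without \eqref{Integra0} the $L^2$-level virial does not close: in your displayed identity the good term is only $-3\int\varphi'(\partial_xu)^2$, so there is no quadratic term into which to absorb either the cubic contribution $\int\varphi' u^3\lesssim\ve\int\varphi'u^2$ or the $\varphi_t$ term $\tfrac{\la'}{\la}\int\tfrac{x}{\la}\varphi'u^2$; neither is a priori integrable in time, and the ``scaling-weight argument'' you invoke for \eqref{AS} has nothing to run on. In short, the claim that ``the subcriticality of the $L^1$ norm enters verbatim'' must be made concrete through $\mathcal I(t)$ and the lower bound $u^2+f_1(u)\ge\tfrac12u^2$; as written, the chain of estimates producing \eqref{Integra0}, then \eqref{Integra4}, \eqref{AS0}, \eqref{Integra5}, and finally \eqref{AS} cannot be started.
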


\begin{rem}
Note that condition \eqref{Smallness} is in certain sense necessary, because large solutions $u(t)$ formally follow a dynamics in which $f_1(u(t))$ is larger than $u^2(t)$, and a completely different dynamics could appear from these assumptions; in particular, the main term could be of cubic order, just like mKdV, which has breather solutions. 
\end{rem}

\begin{rem}
Theorem \ref{TH2} may be in contradiction with the existence of \emph{Gardner breathers}. However, under the hypothesis \eqref{Smallness} of Theorem \ref{TH2}, this is not the case. Indeed, let us recall that the Gardner equation  ($f_1(u):= \mu u^3$ here)
\be\label{Gardner}
\partial_t u + \partial_{x}(\partial_x^2 u +  u^2 + \mu u^3) = 0, \quad  \mu>0,
\ee
possesses (stable) breather solutions, see \cite{Alejo} for more details. Indeed,  if  $\al, \bt>0$ are such that $\Delta:=\al^2+\bt^2-\frac2{9\mu}>0$, then
\[
B(t,x) :=   2\sqrt{\frac 2\mu}\partial_x\arctan\left(\frac{G(t,x)}
{F(t,x)}\right),
\]
where
\[
\begin{aligned}
G & :=   \frac{\bt\sqrt{\al^2+\bt^2}}{\al\sqrt{\Delta}}\sin(\al y_1) -\frac{\sqrt2 \bt[\cosh(\bt y_2)+\sinh(\bt y_2)]}{3\sqrt{\mu}\Delta},\\
F & :=   \cosh(\bt y_2)-\frac{\sqrt2 \bt[\al\cos(\al y_1)-\bt\sin(\al y_1)]}{3\sqrt{\mu}\al\sqrt{\al^2+\bt^2}\sqrt{\Delta}},
\end{aligned}
\]
and $y_1 = x+ \delta t$, $y_2 = x+ \ga t$ , $\delta := \al^2-3\bt^2$, $\ga :=3\al^2-\bt^2$, is a smooth decaying solution to \eqref{Gardner}. Being $\mu>0$ a fixed parameter, small energy breathers must have $\bt$ small (cf. \cite[Lemmas 2.2 and 2.6]{Alejo}), and standing breathers must obey the zero speed condition $\ga=0$, which implies $\al$ also small. Under these two conditions, $\Delta=\al^2+\bt^2-\frac2{9\mu}$ cannot be positive and breathers are not well-defined. Note that $\ga\neq 0$ implies that breathers leave the region $|x| \sim t^{1/2}$ very fast and therefore Theorem \ref{TH2} can be understood as a sort of ``nonlinear scattering'' in Theorem \ref{TH2}. See e.g. \cite{AMV,Alejo} for the role of the Gardner equation in deciding several long-time properties and behavior of the original KdV equation.
\end{rem}

 The proof of Theorems \ref{TH1} and \ref{TH2} are by itself elementary, and no extensive nor deep Fourier analysis are required. We prove this result by following very recent developments concerning the decay of solutions in  $1+1$ dimensional scalar field models. Kowalczyk, Martel and the author of this note showed in \cite{KMM,KMM1} that well chosen Virial functionals can describe in great generality the decay mechanism for models where standard scattering is not available (i.e. there is modified scattering), either because the dimension is too small, or the nonlinearity is long range. Moreover, this decay mechanism also describes ``nonlinear scattering'', in the sense that solutions like \eqref{Soliton} are also discarded by Theorem \ref{TH1} inside $I(t)$, $t\to+\infty$. We will also use as advantage the \emph{subcritical} character of the $L^1$ norm for KdV and its perturbations. This give us control on the $L^2$ norm of the solution before arriving to the standard virial identity in the energy space, which is very hard to control by itself without a previous control on the local $L^2$ norm.

\medskip

  Previous Virial-type decay estimates 
were obtained by Martel-Merle and Merle-Rapha\"el \cite{MM,MR} in the case of generalized KdV and nonlinear Schr\"odinger equations. Moreover, 
the results proved in this note and in \cite{KMM,KMM1,MM,MR} apply to equations which have long range nonlinearities, as well as very low or null 
decay rates. See also \cite{MPP,KMPP} for other applications of this technique to the case of Boussinesq equations, and \cite{KM1} for an application to the case of the BBM equation.

\subsection*{Acknowledgments} We are indebted to M. A. Alejo for several interesting comments and remarks about a first version of this note.

\bigskip

\section{Integrability of the local $L^2$ norm}\label{2}

\medskip

\noindent
With no loss of generality, we assume now that $t\geq 2$ and $C=1$ in \eqref{Interval}. Let
\be\label{la}
\la(t) := \frac{t^{1/2}}{\log t}.
\ee
Note that
\[
\la'(t) = \frac1{2t^{1/2} \log t} -\frac1{t^{1/2} \log^2 t} = \frac1{t^{1/2} \log t} \left(\frac12 - \frac1{\log t}\right),
\]
and
\be\label{Computations}
\frac{\la'(t)}{\la(t)} = \frac{1}{t}\left( \frac12 - \frac1{\log t} \right), \quad \la'^2(t) =  \frac{1}{t \log^2 t}\left(\frac12 - \frac 1{\log t} \right)^2.
\ee
Let $\psi$ be a smooth bounded weight. Let us consider the functional for $u$ solving \eqref{gKdV}
\be\label{III}
\mathcal I(t):=\int \psi \Big( \frac{x}{\la(t)} \Big)  u(t,x)dx.
\ee
Notice that from the hypothesis of Theorems \ref{TH1}-\ref{TH2}, one has  $\sup_{t\in \R}\mathcal I(t) <+\infty$. We claim the following result, whose proof is direct. 

\begin{lem}\label{dtI0}
We have
\be\label{dtI}
\begin{aligned}
\frac d{dt}\mathcal I(t) = &~ {} - \frac{\la'(t)}{\la(t)} \int \frac{x}{\la(t)}  \psi'\Big( \frac{x}{\la(t)} \Big) u(t,x)dx 
+ \frac1{\la^3(t)}\int \psi^{(3)}\Big( \frac{x}{\la(t)} \Big) u(t,x)dx \\
& ~ {} + \frac{1}{\la(t)} \int \psi ' \Big( \frac{x}{\la(t)} \Big) (u^2 + f_1(u))(t,x)dx.
\end{aligned}
\ee
\end{lem}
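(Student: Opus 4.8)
The plan is to obtain \eqref{dtI} by a direct differentiation of \eqref{III}, so the content is essentially a chain-rule computation followed by two integrations by parts. First I would differentiate under the integral sign,
\[
\frac{d}{dt}\mathcal I(t) = \int \partial_t\!\Big[\psi\Big(\frac{x}{\la(t)}\Big)\Big] u(t,x)\,dx + \int \psi\Big(\frac{x}{\la(t)}\Big)\,\partial_t u(t,x)\,dx,
\]
which is legitimate because $\psi$ and its derivatives are bounded, $\la$ is smooth and strictly positive for $t\ge 2$, and $u\in L^\infty(\R;L^1)$. For the first integral the chain rule gives $\partial_t[\psi(x/\la(t))] = -\frac{\la'(t)}{\la(t)}\,\frac{x}{\la(t)}\,\psi'(x/\la(t))$, which is exactly the first term on the right-hand side of \eqref{dtI}.

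For the second integral I would insert the equation. Since $p=2$ we have $f(u)=u^2+f_1(u)$, so \eqref{gKdV} reads $\partial_t u = -\partial_x(\partial_x^2 u + u^2 + f_1(u))$. Integrating by parts once and using $\partial_x[\psi(x/\la)] = \frac1{\la}\psi'(x/\la)$ yields
\[
\int \psi\Big(\frac{x}{\la}\Big)\partial_t u\,dx = \frac1{\la(t)}\int \psi'\Big(\frac{x}{\la}\Big)\big(\partial_x^2 u + u^2 + f_1(u)\big)\,dx.
\]
The nonlinear contribution $\frac1{\la(t)}\int \psi'(x/\la)(u^2+f_1(u))\,dx$ is already the last term of \eqref{dtI}. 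For the linear contribution I would integrate by parts twice more, transferring the two spatial derivatives from $u$ onto the weight; each derivative produces a factor $1/\la$, and the net effect is
\[
\frac1{\la(t)}\int \psi'\Big(\frac{x}{\la}\Big)\partial_x^2 u\,dx = \frac1{\la^3(t)}\int \psi^{(3)}\Big(\frac{x}{\la}\Big)u\,dx,
\]
the second term of \eqref{dtI}. Collecting the three pieces gives the claimed identity.

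The only delicate point --- the main obstacle, in an otherwise ``direct'' statement --- is the rigorous justification of the integrations by parts and the vanishing of boundary terms, since for a merely $H^1$ solution $\partial_x^2 u$ need not belong to $L^2$ and the decay of $u$ and its derivatives at spatial infinity is only $H^1$-weak. I would handle this in the standard way: first establish \eqref{dtI} for smooth, rapidly decaying solutions (for which every manipulation and every boundary term is unambiguous), and then extend to the class $C(\R,H^1)\cap L^\infty(\R,L^1)$ by approximation, using continuous dependence of the gKdV flow on the data together with the fact that each term in \eqref{dtI} is continuous with respect to the norms controlled by the hypotheses. Alternatively, if one chooses $\psi'$ with suitable decay (or compact support, as will be the case for the weights used later), the spatial integrals localize and only local-in-space regularity of $u$ is needed, so that the boundary terms vanish trivially.
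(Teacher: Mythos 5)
Your computation is exactly the ``direct'' proof the paper alludes to (the paper states the lemma with the remark that its proof is direct and omits the details): differentiate under the integral, apply the chain rule to the weight, substitute the equation $\partial_t u=-\partial_x(\partial_x^2u+u^2+f_1(u))$, and integrate by parts to move derivatives onto $\psi$, and all three resulting terms match \eqref{dtI}. Your added remarks on justifying the integrations by parts by density/approximation are standard and correct, so the proposal is complete and consistent with the paper.
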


The main result of this section is an averaged control of the local $L^2$ norm of the solution at infinity in time.
\begin{cor}\label{Integra2}
Consider $\psi(x) := \tanh x$, such that $\psi'(x) = \sech^2 x$. Assume that $p=2$ and either $f_1(s) =0$, or $f_1(s)\neq 0$ and condition \eqref{Smallness} holds. Then we have
\be\label{Integra0}
\int_2^\infty\frac1{\la(t)} \int \sech^2 \Big( \frac{x}{\la(t)} \Big)  u^2(t,x)dx dt < +\infty,
\ee
and for some increasing  sequence of time $t_n\to +\infty$,
\be\label{Integra1}
\lim_{n\to \infty} \int \sech^2 \Big( \frac{x}{\la(t_n)} \Big) u^2(t_n,x)dx =0.
\ee
\end{cor}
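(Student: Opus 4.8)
The idea is to integrate the identity \eqref{dtI} from Lemma \ref{dtI0} in time, with the specific choice $\psi(x)=\tanh x$, and extract from it an integrable, coercive positive quantity. The key observation is that the \emph{nonlinear} term in \eqref{dtI} is precisely the good term: since $\psi'(x)=\sech^2 x>0$, the contribution $\frac1{\la(t)}\int \sech^2(\tfrac{x}{\la(t)})\,u^2\,dx$ is a nonnegative quantity whose time-integral we wish to bound. The plan is therefore to move this term to one side and to show that everything else integrates to a finite quantity over $[2,\infty)$. First I would write
\[
\frac1{\la(t)}\int \sech^2\Big(\tfrac{x}{\la(t)}\Big)\,u^2\,dx = \frac{d}{dt}\mathcal I(t) + \frac{\la'(t)}{\la(t)}\int \tfrac{x}{\la(t)}\,\sech^2\Big(\tfrac{x}{\la(t)}\Big)\,u\,dx - \frac1{\la^3(t)}\int \psi^{(3)}\Big(\tfrac{x}{\la(t)}\Big)\,u\,dx - \frac1{\la(t)}\int \sech^2\Big(\tfrac{x}{\la(t)}\Big)\,f_1(u)\,dx,
\]
and then integrate in $t$ from $2$ to $T$ and let $T\to\infty$.

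The contribution of $\frac{d}{dt}\mathcal I$ telescopes to $\mathcal I(T)-\mathcal I(2)$, which is bounded uniformly in $T$ because $\sup_t|\mathcal I(t)|<\infty$ (the uniform-in-time $L^1$ bound, as noted after \eqref{III}). For the remaining three terms I would bound each integrand pointwise and show that its time-integral converges. The weight $\frac{x}{\la}\sech^2(\tfrac{x}{\la})$ is uniformly bounded, so the second term is controlled by $\frac{|\la'(t)|}{\la(t)}\int|u|\,dx \lesssim \frac1t\,\|u(t)\|_{L^1}$ using \eqref{Computations}, which is integrable in time. The third term is controlled by $\frac1{\la^3(t)}\int|\psi^{(3)}(\tfrac{x}{\la})|\,|u|\,dx \lesssim \frac{\log^3 t}{t^{3/2}}\,\|u(t)\|_{L^1}$ from the definition \eqref{la} of $\la$, again integrable at infinity. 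In the KdV case $f_1=0$ the last term vanishes; in the Gardner-type case, the smallness \eqref{Smallness} together with the structure \eqref{f(s)} lets me estimate $|f_1(u)|\lesssim |u|^3 \le \ve\,u^2$, so this term is absorbed into the left-hand side with a constant $\le C\ve<\tfrac12$, i.e.\ I would close it by choosing $\ve$ small enough so that $\frac1{\la}\int\sech^2 |f_1(u)|$ can be moved to the left and dominated by half of the good term.

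The main obstacle is the control of the nonlinear term $f_1(u)$ in the Gardner case: unlike the quadratic term $u^2$, which carries the correct sign through $\psi'>0$, the cubic remainder $f_1(u)$ has no definite sign and must be absorbed. The smallness hypothesis \eqref{Smallness} is exactly what makes this possible, but it requires the pointwise bound $\|u(t)\|_{L^\infty}\lesssim \|u(t)\|_{H^1}<\ve$ (by Sobolev embedding) so that $|f_1(u)|\le \tfrac12 u^2$ holds uniformly, which is why \eqref{Smallness} cannot be dispensed with. Once \eqref{Integra0} is established, \eqref{Integra1} follows immediately: if the integrand $g(t):=\frac1{\la(t)}\int\sech^2(\tfrac{x}{\la(t)})u^2\,dx$ did not have a subsequence of times tending to zero, then $g(t)\ge\eta>0$ for all $t$ large, contradicting $\int_2^\infty g(t)\,dt<\infty$; hence there must exist $t_n\to\infty$ with $g(t_n)\to 0$, which is exactly \eqref{Integra1}.
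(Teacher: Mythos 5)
Your overall strategy (rearrange \eqref{dtI}, integrate in time, use the uniform bound on $\mathcal I$) is the right one, but your treatment of the first error term contains a fatal error. You bound $\big|\frac{\la'(t)}{\la(t)}\int \frac{x}{\la(t)}\,\sech^2\big(\frac{x}{\la(t)}\big)\,u\,dx\big|$ by $C\,\frac{\la'(t)}{\la(t)}\|u(t)\|_{L^1}\lesssim \frac1t$, and then assert this is integrable in time; it is not, since $\int_2^\infty \frac{dt}{t}=+\infty$. Using Cauchy--Schwarz with the $L^2$ bound instead gives $\lesssim \la'(t)\la^{-1/2}(t)\sim t^{-3/4}\log^{-1/2}t$, which also fails to be integrable. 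The point of the paper's argument --- and the reason the scale $\la(t)=t^{1/2}/\log t$ is chosen as it is --- is that this term cannot be estimated away by any a priori norm of $u$: one must apply Young's inequality pointwise to split it as $C(\la'(t))^2 + \frac{1}{4\la(t)}\int\sech^2\big(\frac{x}{\la(t)}\big)u^2\,dx$, where the first piece is $\sim \frac{1}{t\log^2 t}$ (integrable precisely because of the logarithm in $\la$) and the second piece is a quarter of the good term and gets absorbed into the left-hand side. Without this absorption step the proof does not close. (Your handling of the $\psi^{(3)}$ term via the $L^1$ norm, giving $\log^3 t/t^{3/2}$, is fine, and your absorption of $f_1(u)$ via the Sobolev $L^\infty$ bound is essentially the paper's argument $s^2+f_1(s)\ge\frac12 s^2$.)

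There is a second, smaller gap in your deduction of \eqref{Integra1}. You argue that the integrand $g(t):=\frac{1}{\la(t)}\int\sech^2\big(\frac{x}{\la(t)}\big)u^2\,dx$ must have a subsequence tending to zero, but that statement is vacuous ($g(t)\le \ve^2/\la(t)\to 0$ automatically) and is not \eqref{Integra1}, which concerns $h(t):=\int\sech^2\big(\frac{x}{\la(t)}\big)u^2\,dx$ \emph{without} the prefactor $1/\la(t)$. The correct argument uses that $\int_2^\infty\frac{dt}{\la(t)}=\int_2^\infty\frac{\log t}{t^{1/2}}\,dt=+\infty$: if $h(t)\ge\eta>0$ for all large $t$, then $\int_2^\infty\frac{h(t)}{\la(t)}\,dt\ge \eta\int\frac{dt}{\la(t)}=+\infty$, contradicting \eqref{Integra0}; hence $\liminf_{t\to\infty}h(t)=0$. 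This non-integrability of $1/\la$ is exactly the property the paper invokes at the end of its proof.
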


\begin{proof}
Let us estimate the terms in \eqref{dtI}. With the choice of $\la(t)$ given in \eqref{la}, we have
\[
\begin{aligned}
\abs{ \frac{\la'(t)}{\la(t)} \int \frac{x}{\la(t)} \psi'\Big( \frac{x}{\la(t)} \Big) u(t,x)dx} \le &~  \frac{(\la'(t))^2}{\la(t)} \int \Big(\frac{x}{\la(t)}\Big)^2 \psi'\Big( \frac{x}{\la(t)} \Big) dx \\
& ~ +\frac{\la'(t)}{4\la(t)} \frac1{\la'(t)} \int  \psi'\Big( \frac{x}{\la(t)} \Big) u^2(t,x)dx.
\end{aligned}
\]
Therefore
\be\label{Est1}
\abs{ \frac{\la'(t)}{\la(t)} \int \frac{x}{\la(t)} \psi'\Big( \frac{x}{\la(t)} \Big) u(t,x)dx} \le  C(\la'(t))^2 +\frac{1}{4\la(t)}\int  \psi'\Big( \frac{x}{\la(t)} \Big) u^2(t,x)dx.
\ee
On the other hand, using \eqref{la}
\be\label{Est2}
\begin{aligned}
 \frac1{\la^3(t)}\int \psi^{(3)}\Big( \frac{x}{\la(t)} \Big) u(t,x)dx  \lesssim  \frac1{\la^3(t)}  \la^{1/2}(t) \|u(t)\|_{L^2} 
 \lesssim     \frac{\log^{5/2} t}{t^{5/4}} .
\end{aligned} 
\ee
Therefore, in the case where $f_1(s) =0$, we conclude from \eqref{dtI}, \eqref{Est1} and \eqref{Est2} that 
\[
\begin{aligned}
 \frac{1}{\la(t)} \int \psi ' \Big( \frac{x}{\la(t)} \Big) u^2(t,x)dx  \leq &~ \frac{C\log^{5/2} t}{t^{5/4}}+ C(\la'(t))^2 \\
 & \quad ~ {} +\frac{1}{4\la(t)}\int  \psi'\Big( \frac{x}{\la(t)} \Big) u^2(t,x)dx.
\end{aligned}
\]
Using that $\psi '=\sech^2$ and \eqref{Computations}, we get
\[
 \frac{1}{\la(t)} \int\sech^2 \Big( \frac{x}{\la(t)} \Big) u^2(t,x)dx  \lesssim  \frac{\log^{5/2} t}{t^{5/4}} +  \frac{1}{t \log^2 t},
\]
an estimate that shows \eqref{Integra0} for this case. For the case $f_1(s) \neq 0$, we use \eqref{f(s)}  to bound 
\[
s^2 + f_1(s) \geq \frac12 s^2,
\]
valid for all $|s|$ small. Consequently, from \eqref{Smallness},
\[
 \frac{1}{\la(t)} \int \psi ' \Big( \frac{x}{\la(t)} \Big) (u^2 + f_1(u))(t,x)dx \geq  \frac{1}{2\la(t)} \int \psi ' \Big( \frac{x}{\la(t)} \Big) u^2 (t,x)dx,
\]
and the rest of the proof is the same as in the $f_1(s)=0$ case. Finally, \eqref{Integra1} is just consequence of \eqref{Integra0} and the fact that $\frac1{\la(t)}$ is not integrable in $[2,\infty)$.
\end{proof}

\section{Averaged local $L^2$ decay of the derivatives}\label{3}

\medskip
\noindent
Let $\phi$ be a smooth, bounded function, to be defined later. Let us define the functional
\be\label{J}
\mathcal J(t):= \frac12\int \phi\Big( \frac{x}{\la(t)} \Big) u^2(t,x)dx.
\ee
Notice that from the hypothesis of Theorems \ref{TH1} and \ref{TH2}, one has  $\sup_{t\in \R}\mathcal J(t) <+\infty$. We claim the following result.

\begin{lem}\label{dtJ0}
Let $F_1:\R\to \R$ be such that $F_1(0)=0$ and $F_1'(s) =f_1(s)$ (see \eqref{f(s)}). Then we have
\be\label{dtJ}
\begin{aligned}
\frac d{dt}\mathcal J(t) = &~ {} - \frac{\la'(t)}{2\la(t)} \int \frac{x}{\la(t)} \phi'\Big( \frac{x}{\la(t)} \Big) u^2(t,x)dx -\frac3{2\la(t)} \int \phi'\Big( \frac{x}{\la(t)} \Big) (\partial_x u)^2(t,x)dx \\
& ~ {}+ \frac1{2\la^3(t)}\int \phi^{(3)}\Big( \frac{x}{\la(t)} \Big) u^2(t,x)dx \\
& ~ {}+ \frac{1}{\la(t)} \int \phi ' \Big( \frac{x}{\la(t)} \Big) \Big( \frac23u^{3} + uf_1(u) - F_1(u)\Big)(t,x)dx.
\end{aligned}
\ee
\end{lem}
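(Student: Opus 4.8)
The plan is to carry out a direct virial differentiation, keeping careful track of how each spatial derivative falls on the weight. Writing $\Phi(t,x):=\phi\big(x/\la(t)\big)$, so that $\partial_x^k\Phi=\la^{-k}(t)\,\phi^{(k)}\big(x/\la(t)\big)$ and $\partial_t\Phi=-\frac{\la'(t)}{\la(t)}\,\frac{x}{\la(t)}\,\phi'\big(x/\la(t)\big)$, I would first split
\[
\frac{d}{dt}\mathcal J(t)=\frac12\int(\partial_t\Phi)\,u^2\,dx+\int\Phi\,u\,\partial_t u\,dx .
\]
The first integral reproduces verbatim the moving-weight term $-\frac{\la'(t)}{2\la(t)}\int\frac{x}{\la(t)}\phi'\big(\frac{x}{\la(t)}\big)u^2\,dx$ of \eqref{dtJ}. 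For the second integral I would insert $\partial_t u=-\partial_x^3u-\partial_x f(u)$ from \eqref{gKdV}, separating it into a dispersive contribution $-\int\Phi\,u\,\partial_x^3u\,dx$ and a nonlinear one $-\int\Phi\,u\,\partial_x f(u)\,dx$.

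The dispersive term is the core computation. I would integrate by parts three times, using the elementary identities $\int\Phi\,u_x u_{xx}\,dx=-\frac12\int(\partial_x\Phi)u_x^2\,dx$ and $\int(\partial_x^2\Phi)\,u u_x\,dx=-\frac12\int(\partial_x^3\Phi)u^2\,dx$ to collect the surviving pieces. I expect this to produce
\[
-\int\Phi\,u\,\partial_x^3u\,dx=\frac12\int(\partial_x^3\Phi)\,u^2\,dx-\frac32\int(\partial_x\Phi)\,u_x^2\,dx,
\]
and substituting $\partial_x\Phi=\la^{-1}\phi'$ and $\partial_x^3\Phi=\la^{-3}\phi^{(3)}$ yields precisely the $(\partial_x u)^2$ and $\phi^{(3)}$ terms of \eqref{dtJ}. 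For the nonlinear term I would integrate by parts once to move the derivative off $f(u)$ and then use $f(u)\,u_x=\partial_x\big(F(u)\big)$ with $F'=f$, $F(0)=0$, obtaining $-\int\Phi\,u\,\partial_x f(u)\,dx=\frac1{\la(t)}\int\phi'\big(\frac{x}{\la(t)}\big)\big(uf(u)-F(u)\big)\,dx$. Since $p=2$ gives $f(u)=u^2+f_1(u)$ and hence $F(u)=\frac13u^3+F_1(u)$ (using \eqref{f(s)}), the pointwise algebraic identity $uf(u)-F(u)=\frac23u^3+uf_1(u)-F_1(u)$ produces the final term of \eqref{dtJ}.

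The only delicate point, and the one I would treat most carefully, is the justification of the integrations by parts: the hypotheses give only $u\in C(\R,H^1)$, so $\int\Phi\,u\,\partial_x^3u\,dx$ is not a priori meaningful for such $u$, and boundary terms must be shown to vanish. I would therefore first establish \eqref{dtJ} for smooth, spatially decaying solutions, where every boundary contribution vanishes because the weights $\phi,\phi',\phi^{(3)}$ are globally bounded and $u$ together with its derivatives decays, and then pass to general $H^1$ solutions by approximating the initial data and invoking continuity of the flow. This is legitimate because, after all the integrations by parts, the right-hand side of \eqref{dtJ} involves only $u$ and $\partial_x u$ in $L^2$ paired against bounded weights, hence is continuous in the $H^1$ topology; the left-hand side is controlled by the same continuity applied to $\mathcal J$ itself. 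I expect no real difficulty beyond this regularity bookkeeping, since $\la(t)$ is finite and all weights and their derivatives are uniformly bounded.
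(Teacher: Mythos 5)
Your computation is correct and is exactly the standard virial differentiation that the paper itself does not write out but delegates to the reference \cite{MM2}: the three integrations by parts on the dispersive term and the identity $uf(u)-F(u)=\frac23u^3+uf_1(u)-F_1(u)$ reproduce \eqref{dtJ} term by term. Your additional care about justifying the integrations by parts for $H^1$ solutions via smooth approximation is a sensible (and standard) supplement to what the paper leaves implicit.
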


\begin{proof}
The proof of this result is standard, see e.g. \cite{MM2} for details.
\end{proof}

Collecting the results obtained in Corollary \ref{Integra2} and Lemma \ref{dtJ0}, we conclude that the local $L^2$ norm of the derivate of $u$ is integrable in time. 
\begin{cor}
Consider the weight function $\phi(x) := \tanh (2x)$, such that $\phi'(x) = 2\sech^2 (2x)$.  Then we have from \eqref{dtJ}
\be\label{Integra4}
\int_2^\infty\frac1{\la(t)} \int \sech^4 \Big( \frac{x}{\la(t)} \Big) (\partial_x u)^2(t,x)dx dt <+\infty.
\ee
In particular, there exists an increasing sequence of times $s_n\to +\infty$ such that 
\be\label{AS1}
\lim_{n\to +\infty}\int \sech^4 \Big( \frac{x}{\la(s_n)} \Big) (\partial_x u)^2(s_n,x)dx =0.
\ee
\end{cor}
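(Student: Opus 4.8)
The plan is to integrate the virial identity \eqref{dtJ} in time over $[2,T]$ and extract the manifestly good term $-\tfrac{3}{2\la(t)}\int \phi'(\tfrac{x}{\la})(\partial_x u)^2$, which carries a favourable sign since $\phi'=2\sech^2(2\cdot)>0$. Rearranging \eqref{dtJ}, I would write
\[
\frac{3}{2\la(t)}\int \phi'\Big(\tfrac{x}{\la(t)}\Big)(\partial_x u)^2 \, dx = -\frac{d}{dt}\mathcal J(t) + R_1(t)+R_2(t)+R_3(t),
\]
where $R_1,R_2,R_3$ are respectively the $\la'/\la$-term, the $\phi^{(3)}$-term, and the nonlinear term on the right-hand side of \eqref{dtJ}. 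Since $\mathcal J$ is uniformly bounded in time (see \eqref{J}), integrating over $[2,T]$ turns $-\int_2^T \tfrac{d}{dt}\mathcal J = \mathcal J(2)-\mathcal J(T)$ into a quantity bounded uniformly in $T$. Thus everything reduces to proving $\int_2^\infty |R_j(t)|\,dt<\infty$ for $j=1,2,3$.

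The key to controlling the $R_j$ is a set of elementary pointwise comparisons between the two weights in play, namely $\psi'=\sech^2$ from Corollary \ref{Integra2} and $\phi'=2\sech^2(2\cdot)$. Using $\cosh(2y)=2\cosh^2 y-1$, one has on one hand $\cosh(2y)\le 2\cosh^2 y$, hence $\sech^4 y\le 4\sech^2(2y)$, and on the other hand $\cosh(2y)\ge\cosh y$, hence $\sech^2(2y)\le \sech^2 y$. Finally, since $y\sech^2(2y)$ decays exponentially, $|y\phi'(y)|\lesssim \sech^2 y=\psi'(y)$. The first inequality says the good term dominates the target $\sech^4$ weight; the other two let me fold each error term into the already-integrable quantity $\tfrac{1}{\la}\int\sech^2(\tfrac{x}{\la})u^2$ furnished by \eqref{Integra0}.

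With these in hand the three remainders are handled in turn. For $R_1$ I would use $|\la'/\la|\lesssim 1/t\le 1/\la$ (valid for $t\ge 2$, since $1/\la=\log t/\sqrt t$) together with $|y\phi'(y)|\lesssim \psi'(y)$ to get $|R_1(t)|\lesssim \tfrac{1}{\la}\int\sech^2(\tfrac{x}{\la})u^2$, integrable by \eqref{Integra0}. For $R_2$ the crude bound $|R_2(t)|\le \tfrac{1}{2\la^3}\|\phi^{(3)}\|_{L^\infty}\|u\|_{L^2}^2\lesssim \log^3 t/t^{3/2}$ is directly integrable. For $R_3$ I would use the Sobolev bound $\|u\|_{L^\infty}\lesssim\|u\|_{H^1}<\ve$ and the smallness structure \eqref{f(s)} (namely $f_1(s)=o(s^2)$, so $|\tfrac23 u^3+uf_1(u)-F_1(u)|\lesssim \ve\,u^2$ for $u$ small), followed by $\sech^2(2\cdot)\le\sech^2$, to obtain $|R_3(t)|\lesssim \tfrac{\ve}{\la}\int\sech^2(\tfrac{x}{\la})u^2$, again integrable by \eqref{Integra0}; in the pure KdV case $f_1=0$ only the cubic term survives and the same estimate holds. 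Summing gives $\int_2^\infty\tfrac{1}{\la}\int\phi'(\tfrac{x}{\la})(\partial_x u)^2<\infty$, and the comparison $\sech^4 y\le 4\sech^2(2y)$ upgrades this to \eqref{Integra4}. Finally, \eqref{AS1} follows exactly as \eqref{Integra1} did: since $\int_2^\infty \tfrac{1}{\la(t)}\,dt=\infty$ while the weighted time-integral is finite, the integrand cannot remain bounded below, producing a sequence $s_n\to+\infty$ along which $\int\sech^4(\tfrac{x}{\la(s_n)})(\partial_x u)^2\to 0$.

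I expect the main obstacle to be not any single estimate but the bookkeeping of the two competing weights: the dissipation naturally comes in the weight $\sech^2(2\cdot)$, the target conclusion is stated in $\sech^4$, and the error terms are only controlled in the weight $\sech^2$ inherited from Corollary \ref{Integra2}. Choosing $\phi=\tanh(2\cdot)$ (so that $\phi'=2\sech^2(2\cdot)$ sits \emph{between} $\sech^4$ and $\sech^2$ in the sense above) is precisely what makes all three roles compatible, and verifying these pointwise comparisons is the crux of the argument.
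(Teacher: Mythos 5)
Your proposal is correct and follows essentially the same route as the paper: integrate \eqref{dtJ}, use the sign of the $\phi'$-term, bound the $\la'/\la$ and $\phi^{(3)}$ remainders directly, and control the nonlinear term by $\frac{\ve}{\la(t)}\int \phi'(\frac{x}{\la(t)})u^2$, which is integrable in time by Corollary \ref{Integra2}. The paper leaves the weight comparisons and the first two remainders implicit (``similar to the proof of Corollary \ref{Integra2}''), so your write-up simply supplies details the author omitted.
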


\begin{proof}
Similar to the proof of Corollary \ref{Integra2}. See also \cite[Lemma 2.1]{MPP} for a similar proof. The only difficult term is the nonlinear, which is bounded as follows:
\[
\abs{ \frac{1}{\la(t)} \int \phi ' \Big( \frac{x}{\la(t)} \Big) \Big( \frac23u^{3} + uf_1(u) - F_1(u)\Big)(t,x)dx} \lesssim \frac{\ve}{\la(t)} \int \phi ' \Big( \frac{x}{\la(t)} \Big) u^2.
\]
Using Corollary \ref{Integra2}, this quantity integrates in time.
\end{proof}

Now we prove the $L^2$ decay in \eqref{AS}. Recall the interval $I(t)$ defined in \eqref{Interval}.
\begin{cor}
Assume the hypothesis of Theorem \ref{TH1}, or Theorem \ref{TH2}. Then  
\be\label{AS0}
\lim_{t\to \pm\infty}\|u(t) \|_{L^2(I(t))} =0.
\ee
\end{cor}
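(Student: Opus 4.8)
The plan is to reduce the $L^2$ decay on $I(t)$ to the decay of a single nonnegative, \emph{narrow}-weighted functional and then run a bounded-variation-in-time argument. With $C=1$ and $t\ge 2$ as normalized above, set
\[
G(t):=\int \sech^4\Big(\frac{x}{\la(t)}\Big)u^2(t,x)\,dx\ge 0 ,
\]
which is nothing but $2\mathcal J(t)$ for the admissible weight $\phi=\sech^4$. Since $|x/\la(t)|<1$ on $I(t)$, one has $\sech^4(x/\la(t))\ge \sech^4(1)>0$ there, whence $\|u(t)\|_{L^2(I(t))}^2\le \sech^{-4}(1)\,G(t)$. Thus \eqref{AS0} for $t\to+\infty$ reduces to $G(t)\to 0$; the case $t\to-\infty$ follows from the same argument applied to $u(-t,-x)$, which solves the same equation, satisfies the same hypotheses, and leaves $I(t)$ invariant.

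To control $G$ I would differentiate it using Lemma \ref{dtJ0} with $\phi=\sech^4$, so that $G'(t)$ equals twice the right-hand side of \eqref{dtJ}, and note $\phi'=-4\sech^4\tanh$ obeys $|\phi'|\le 4\sech^4$ together with $|w\,\phi'(w)|+|\phi^{(3)}(w)|\lesssim \sech^2 w$. The decisive point is that $\sech^4$ is exactly narrow enough to match the derivative estimate already in hand: the term $-\frac{3}{2\la(t)}\int \phi'(\partial_x u)^2$ is bounded by $\frac{C}{\la(t)}\int \sech^4(x/\la(t))(\partial_x u)^2$, which is integrable in $t$ by \eqref{Integra4}. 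The remaining terms of \eqref{dtJ} are all of $L^2$ type carrying the wider weight: using $\frac{\la'}{\la}\lesssim \frac{1}{\la(t)}$ and $\la^{-3}(t)\le \la^{-1}(t)$, together with the nonlinear bound $\big|\frac23u^3+uf_1(u)-F_1(u)\big|\lesssim \|u\|_{L^\infty}u^2\lesssim \ve\,u^2$ (valid since $\|u(t)\|_{H^1}<\ve$), each of them is dominated by $\frac{C}{\la(t)}\int \sech^2(x/\la(t))u^2$, which is integrable in time by \eqref{Integra0}. Hence $|G'(t)|\le m(t)$ with $m\in L^1([2,\infty))$.

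It then follows from $\int_2^\infty |G'|<\infty$ that $G(t)$ converges to some limit $L\ge 0$ as $t\to+\infty$. To identify $L=0$ I would use that $G(t)\le \int \sech^2(x/\la(t))u^2$, so \eqref{Integra0} gives $\int_2^\infty G(t)/\la(t)\,dt<\infty$; since $1/\la\notin L^1([2,\infty))$, a strictly positive limit $L$ would force this integral to diverge, a contradiction. (Equivalently, along the sequence of \eqref{Integra1} one has $G(t_n)\le \int\sech^2(x/\la(t_n))u^2(t_n)\to 0$.) Therefore $G(t)\to 0$, and \eqref{AS0} follows. The main obstacle is precisely the weight bookkeeping in the middle step: the naive choice $\phi=\sech^2$ fails, since its derivative term carries the wide weight $\sech^2$ on $(\partial_x u)^2$, which is \emph{not} controlled by \eqref{Integra4}; choosing a weight at least as narrow as $\sech^4$ is what makes every term of \eqref{dtJ} integrable in time, after which the upgrade from $\liminf G=0$ to $\lim G=0$ is routine.
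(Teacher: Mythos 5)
Your proposal is correct and follows essentially the same route as the paper: the paper also runs a bounded-variation argument on the weighted mass functional $\mathcal J$ with a narrow weight (it takes $\phi=\sech^6$ rather than your $\sech^4$, but either choice makes $|\tfrac{d}{dt}\mathcal J|$ controlled by the $\sech^4$-weighted space-time integrals \eqref{Integra0} and \eqref{Integra4}), and then identifies the limit as zero along the sequence from \eqref{Integra1}. Your write-up merely makes explicit some steps the paper leaves implicit (the term-by-term weight bookkeeping, the existence of the limit, and the reduction of $t\to-\infty$ to $t\to+\infty$ via $u(-t,-x)$).
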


\begin{proof}
It is enough to prove the result in the case $t\geq 2$. Consider now the weight function $\phi(x) := \sech^6 x$  in \eqref{J}. Then, \eqref{dtJ} leads to the estimate (see \cite{MPP} for similar results)
\be\label{dtK_estimate}
\abs{\frac d{dt}\mathcal J(t)} \lesssim \frac1{\la(t)}\int \sech^4\Big( \frac{x}{\la(t)} \Big) (u^2 +(\partial_x u)^2) (t,x)dx.
\ee
This estimate, \eqref{Integra0} and \eqref{Integra4} impliy that for $t<t_n$,
\[
\abs{\mathcal J(t) -\mathcal J(t_n) } \lesssim  o_{t\to +\infty}(1),
\]
and using \eqref{Integra1} we conclude.
\end{proof}

\noindent
Consider again $\phi$ smooth and bounded. Let us define now the functional
\be\label{K}
\mathcal K(t):= \frac12\int \phi\Big( \frac{x}{\la(t)} \Big) \left((\partial_xu)^2 -\frac23u^3 -2F_1(u)\right)(t,x)dx.
\ee
Notice that from the hypothesis of Theorems \ref{TH1} and \ref{TH2}, one has  $\sup_{t\in \R}\mathcal{K}(t) <+\infty$. We claim the following result.

\begin{lem}
We have
\be\label{dtK}
\begin{aligned}
\frac d{dt}\mathcal{K}(t)= &~ {} - \frac{\la'(t)}{2\la(t)} \int \frac{x}{\la(t)} \phi' \Big( \frac{x}{\la(t)} \Big) \left((\partial_xu)^2 -\frac23u^3 -2F_1(u)\right) \\
& ~ {} -\frac3{2\la(t)}  \int  \phi'\Big( \frac{x}{\la(t)} \Big) (\partial_x^2 u)^2  +  \frac1{2\la^3(t)} \int   \phi'''\Big( \frac{x}{\la(t)} \Big)(\partial_x u)^2  \\
& ~ {}  +  \frac1{3\la^3(t)} \int   \phi'''\Big( \frac{x}{\la(t)} \Big) u^3   + \frac1{\la^3(t)} \int   \phi'''\Big( \frac{x}{\la(t)} \Big)  F_1(u)  \\
& ~ {} + \frac2{\la(t)} \int \phi'\Big( \frac{x}{\la(t)} \Big) \partial_x( u^2 + f_1(u))   \partial_x u \\
& ~ {} + \frac2{\la^2(t)} \int \phi''\Big( \frac{x}{\la(t)} \Big) ( u^2 + f_1(u))   \partial_x u \\
& ~ {} - \frac1{2\la(t)} \int \phi'\Big( \frac{x}{\la(t)} \Big) ( u^2 + f_1(u))^2 .
\end{aligned}
\ee
\end{lem}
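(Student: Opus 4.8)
The plan is to compute $\frac{d}{dt}\mathcal K(t)$ directly, differentiating the definition \eqref{K} under the integral sign and substituting the equation \eqref{gKdV} in the form $\partial_t u = -\partial_x(\partial_x^2 u + u^2 + f_1(u))$. First I would split the time derivative into two pieces: the term coming from differentiating the weight $\phi(x/\la(t))$, which produces the factor $-\frac{\la'(t)}{\la(t)}\frac{x}{\la(t)}\phi'(x/\la(t))$ acting on the density $(\partial_x u)^2 - \frac23 u^3 - 2F_1(u)$ (yielding the first line of \eqref{dtK}), and the term coming from differentiating the density itself.

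For the second piece I would handle the three summands of the density in turn. The $\frac12(\partial_x u)^2$ term gives $\int \phi\, \partial_x u\, \partial_x u_t$, which after integration by parts in $x$ becomes $-\int \partial_x(\phi\, \partial_x u)\, u_t$; substituting $u_t = -\partial_x(\partial_x^2 u + u^2 + f_1(u))$ and integrating by parts again generates the $(\partial_x^2 u)^2$ dissipation term, the $\phi'''$ terms (via the weight derivatives landing on the linear part $\partial_x^3 u$), and the mixed nonlinear terms involving $\partial_x(u^2+f_1(u))\partial_x u$ and $\phi''(u^2+f_1(u))\partial_x u$. The $-\frac13 u^3$ and $-2F_1(u)$ terms contribute $-\int \phi(u^2 + f_1(u)) u_t$, and after substituting the equation and integrating by parts these produce the remaining $\phi'''$ contributions in $u^3$ and $F_1(u)$, as well as the quadratic-in-nonlinearity term $-\frac1{2\la(t)}\int \phi'(u^2+f_1(u))^2$ (the latter arising because $\int \phi(u^2+f_1(u))\partial_x(u^2+f_1(u)) = -\frac12\int \phi' \frac{1}{\la(t)}(u^2+f_1(u))^2$ after one integration by parts, using $F_1' = f_1$ to recognize the nonlinear flux). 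Throughout, each spatial derivative falling on $\phi(x/\la(t))$ produces a factor $\frac1{\la(t)}$, which bookkeeps the powers of $\la$ appearing in \eqref{dtK}.

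The key bookkeeping is to track which integrations by parts move derivatives off $u_t$ and onto the weight: the cleanest organization is to write $u_t = -\partial_x E$ with $E := \partial_x^2 u + u^2 + f_1(u)$, so that every term is of the form $\int (\text{weight and its } x\text{-derivatives})\cdot(\text{polynomial in } u, \partial_x u, \partial_x^2 u)$, and then collect by the order of $\phi$-derivative and by the power of $\la(t)$. I expect the main obstacle to be purely organizational rather than conceptual: correctly accounting for all the boundary-free integration by parts and the combinatorial factors (in particular the coefficient $\frac23$ in the density, chosen precisely so the cubic flux terms combine into the displayed form, and the sign and factor $\frac12$ in the $(u^2+f_1(u))^2$ term). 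The justification for discarding boundary terms follows since $u(t)\in H^1$ with the weight and its derivatives decaying, so all integrals converge and integration by parts is valid; this is the same standard computation referenced for \eqref{dtJ}, only carried to one higher order in derivatives.
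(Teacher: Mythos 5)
Your proposal is correct and follows essentially the same route as the paper: differentiate under the integral, split off the weight-derivative term, substitute $\partial_t u = -\partial_x(\partial_x^2 u + u^2 + f_1(u))$, and integrate by parts repeatedly until every term carries the appropriate derivative of $\phi$ and power of $\la(t)$ (the paper organizes this by first producing the perfect square $-\frac1{2\la}\int \phi' (\partial_x^2 u + u^2 + f_1(u))^2$ and then re-expanding, but this is only a difference in the order of the integrations by parts). The resulting identity agrees with \eqref{dtK} up to harmless regroupings such as $\frac1{\la^2}\int \phi''(u^2+f_1(u))\partial_x u = -\frac1{3\la^3}\int\phi''' u^3 - \frac1{\la^3}\int\phi''' F_1(u)$.
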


\begin{proof}
We compute: first,
\[
\begin{aligned}
\frac d{dt}\mathcal{K}(t)= &~ {} - \frac{\la'(t)}{2\la(t)} \int \frac{x}{\la(t)} \phi' \Big( \frac{x}{\la(t)} \Big) \left((\partial_xu)^2 -\frac23u^3 -2F_1(u)\right) \\
& ~ {} +  \int  \phi\Big( \frac{x}{\la(t)} \Big) (\partial_xu  \partial_{tx}u - u^2 \partial_tu -f_1(u)\partial_tu).
\end{aligned}
\]
Integrating by parts, 
\[
\begin{aligned}
\frac d{dt}\mathcal{K}(t)= &~ {} - \frac{\la'(t)}{2\la(t)} \int \frac{x}{\la(t)} \phi' \Big( \frac{x}{\la(t)} \Big) \left((\partial_xu)^2 -\frac23u^3 -2F_1(u)\right) \\
& ~ {} -  \int  \phi\Big( \frac{x}{\la(t)} \Big) (\partial_x^2 u  + u^2  + f_1(u))\partial_tu  - \frac1{\la(t)} \int  \phi'\Big( \frac{x}{\la(t)} \Big) \partial_xu  \partial_tu ,
\end{aligned}
\]
and using the equation,
\[
\begin{aligned}
\frac d{dt}\mathcal{K}(t)= &~ {} - \frac{\la'(t)}{2\la(t)} \int \frac{x}{\la(t)} \phi' \Big( \frac{x}{\la(t)} \Big) \left((\partial_xu)^2 -\frac23u^3 -2F_1(u)\right) \\
& ~ {} -\frac1{2\la(t)}  \int  \phi'\Big( \frac{x}{\la(t)} \Big) (\partial_x^2 u  + u^2  + f_1(u))^2  \\
& ~{} - \frac1{\la(t)} \int  \partial_x\left( \phi'\Big( \frac{x}{\la(t)} \Big) \partial_xu \right) (\partial_x^2 u  + u^2  + f_1(u)) .
\end{aligned}
\]
Simplifying,
\[
\begin{aligned}
\frac d{dt}\mathcal{K}(t)= &~ {} - \frac{\la'(t)}{2\la(t)} \int \frac{x}{\la(t)} \phi' \Big( \frac{x}{\la(t)} \Big) \left((\partial_xu)^2 -\frac23u^3 -2F_1(u)\right) \\
& ~ {} -\frac1{2\la(t)}  \int  \phi'\Big( \frac{x}{\la(t)} \Big) (\partial_x^2 u  + u^2  + f_1(u))^2  \\
& ~ {} - \frac1{\la^2(t)} \int   \phi''\Big( \frac{x}{\la(t)} \Big) \partial_xu (\partial_x^2 u  + u^2  + f_1(u)) \\
& ~ {} - \frac1{\la(t)} \int \phi'\Big( \frac{x}{\la(t)} \Big) (\partial_x^2 u  + u^2  + f_1(u) -u^2 -f_1(u))   (\partial_x^2 u  + u^2  + f_1(u)) ,
\end{aligned}
\]
and
\[
\begin{aligned}
\frac d{dt}\mathcal{K}(t)= &~ {} - \frac{\la'(t)}{2\la(t)} \int \frac{x}{\la(t)} \phi' \Big( \frac{x}{\la(t)} \Big) \left((\partial_xu)^2 -\frac23u^3 -2F_1(u)\right) \\
& ~ {} -\frac3{2\la(t)}  \int  \phi'\Big( \frac{x}{\la(t)} \Big) ((\partial_x^2 u)^2  + (u^2  + f_1(u))^2 + 2\partial_x^2 u  ( u^2  + f_1(u) ))  \\
& ~ {}   + \frac1{2\la^3(t)} \int   \phi'''\Big( \frac{x}{\la(t)} \Big)(\partial_x u)^2  +  \frac1{3\la^3(t)} \int   \phi'''\Big( \frac{x}{\la(t)} \Big) u^3  \\
& ~ {} + \frac1{\la^3(t)} \int   \phi'''\Big( \frac{x}{\la(t)} \Big)  F_1(u)  \\
& ~ {} + \frac1{\la(t)} \int \phi'\Big( \frac{x}{\la(t)} \Big) ( u^2 + f_1(u))   \partial_x^2 u + \frac1{\la(t)} \int \phi'\Big( \frac{x}{\la(t)} \Big) ( u^2 + f_1(u))^2 ,
\end{aligned}
\]
which gives \eqref{dtK} after some integration by parts.
\end{proof}

Note that all the quantities in the RHS of \eqref{dtK} are known to be integrable in time using previous results, except the term with two derivatives $-\frac3{2\la(t)}  \int  \phi'\Big( \frac{x}{\la(t)} \Big) (\partial_x^2 u)^2$. An important corollary of this previous result is the following Kato-type smoothing estimate: 
\begin{cor}
Consider the weight function $\phi(x) := \tanh (3x)$, so that $\phi'(x) = 3\sech^2 (3x)$. Then we have 
\be\label{Integra5}
\int_2^\infty\frac1{\la(t)} \int \sech^6 \Big( \frac{x}{\la(t)} \Big) (\partial_x^2 u)^2(t,x)dx dt <+\infty.
\ee
\end{cor}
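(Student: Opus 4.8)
\emph{The plan} is to integrate the identity \eqref{dtK} for $\frac{d}{dt}\mathcal K(t)$ over a time window $[2,t_n]$ with $t_n\to+\infty$, and to isolate the unique term carrying two derivatives, namely $-\frac{3}{2\la(t)}\int\phi'\big(\frac{x}{\la}\big)(\partial_x^2 u)^2$. Since we have chosen $\phi(x)=\tanh(3x)$, so that $\phi'(x)=3\sech^2(3x)>0$, this term has a definite coercive sign; moving it to the left-hand side produces a nonnegative space-time integrand. Because $\mathcal K$ is bounded uniformly in time, the boundary contribution $\mathcal K(2)-\mathcal K(t_n)$ is $O(1)$ uniformly in $n$. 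Thus the whole game reduces to showing that the remaining seven terms of \eqref{dtK} are \emph{absolutely integrable in time}, which then forces the two-derivative term to be integrable as well.

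For the term-by-term estimates I would exploit three elementary reductions. First, from \eqref{la} one checks $\la(t)\ge 1$ for all $t\ge 2$, so every prefactor $\la^{-k}(t)$ with $k\ge 1$ is bounded by $\la^{-1}(t)$; moreover, using \eqref{Computations}, $\frac{\la'(t)}{\la(t)}=\frac1t\big(\frac12-\frac1{\log t}\big)\lesssim \frac1{\la(t)}$, so the first term of \eqref{dtK} is also governed by the prefactor $\la^{-1}(t)$. Second, all the auxiliary weights appearing in \eqref{dtK} — that is, $|y\phi'(y)|$, $|\phi''(y)|$, $|\phi'''(y)|$ and $|\phi'(y)|$ itself — decay at least like $e^{-6|y|}$ (they are all built from $\sech^2(3y)$), hence are dominated by $\sech^4(y)$ (equivalently by $\sech^2(y)$) at the same spatial scale $y=x/\la(t)$. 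Third, the smallness $\|u\|_{L^\infty}\lesssim\ve$ (coming from \eqref{Smallness0}/\eqref{Smallness} and Sobolev) lets me absorb every super-quadratic factor into $u^2$ or $(\partial_x u)^2$: indeed $|u|^3,\,|F_1(u)|,\,|u^2+f_1(u)|^2\lesssim \ve\,u^2$, $|(2u+f_1'(u))|\lesssim\ve$ in the term $\frac{2}{\la}\int\phi'\partial_x(u^2+f_1(u))\partial_x u=\frac{2}{\la}\int\phi'(2u+f_1'(u))(\partial_x u)^2$, and $|(u^2+f_1(u))\partial_x u|\lesssim\ve(u^2+(\partial_x u)^2)$ in the $\phi''$ term. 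After these reductions every one of the seven terms is bounded by a constant multiple of $\frac{1}{\la(t)}\int\sech^2\big(\frac{x}{\la}\big)u^2$ or $\frac{1}{\la(t)}\int\sech^4\big(\frac{x}{\la}\big)(\partial_x u)^2$, and so is integrable in $t$ by \eqref{Integra0} and \eqref{Integra4}.

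Combining these, $\frac{3}{2}\int_2^{t_n}\frac{1}{\la(t)}\int\phi'\big(\frac{x}{\la}\big)(\partial_x^2 u)^2\,dx\,dt$ is bounded uniformly in $n$; letting $t_n\to+\infty$ and using monotone convergence (the integrand is nonnegative) yields $\int_2^\infty\frac{1}{\la(t)}\int\phi'\big(\frac{x}{\la}\big)(\partial_x^2 u)^2\,dx\,dt<+\infty$. Finally I would compare weights at the same scale: since $\sech^6(y)\sim 64\,e^{-6|y|}$ while $\phi'(y)=3\sech^2(3y)\sim 12\,e^{-6|y|}$ (with both bounded below by positive constants near $y=0$), one has $\sech^6(y)\lesssim\phi'(y)$ uniformly, which converts the last bound into \eqref{Integra5}. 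The one genuinely delicate point — and the main obstacle — is precisely this bookkeeping of \emph{weights and scales}: one must verify that each auxiliary weight is dominated by the reference weights $\sech^2$ and $\sech^4$ of the previously established estimates, while the crucial two-derivative term, with its favorable sign, is instead controlled \emph{from below} by $\sech^6$; everything else is the routine absorption of nonlinear factors using $\ve$-smallness and the growth of $\la(t)$.
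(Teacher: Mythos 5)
Your proposal is correct and is essentially the paper's own argument: the paper likewise integrates \eqref{dtK}, notes that every term on the right-hand side except the signed two-derivative term is time-integrable by \eqref{Integra0} and \eqref{Integra4}, and uses the uniform bound on $\mathcal K$ to conclude. You simply make explicit the weight comparisons (e.g. $\sech^2(3y)\lesssim\sech^4(y)$ and $\sech^6(y)\lesssim\sech^2(3y)$) and the $\ve$-absorption of the nonlinear factors, which the paper leaves implicit.
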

This last estimate is proved using \eqref{Integra0} and \eqref{Integra4}. By taking $\la(t)=c_0$, and using \eqref{Integra0} and \eqref{Integra4}, we also have \eqref{Integra50}.

\bigskip

\section{End of proof of Theorem \ref{TH1}}\label{4}

\medskip

Consider the weight $\phi(x):= \sech^{8}(x)$ in \eqref{K}. From \eqref{dtK} we have
\[
\abs{\frac d{dt}\mathcal K(t)} \lesssim \frac1{\la(t)}\int \sech^6\Big( \frac{x}{\la(t)} \Big) (u^2 +(\partial_x u)^2 + (\partial_x^2 u)^2) (t,x)dx.
\]
This estimate, \eqref{Integra0}, \eqref{AS1} and \eqref{Integra5} imply that for $t<s_n$,
\[
\abs{\mathcal K(t) -\mathcal K(s_n) } \lesssim  o_{t\to +\infty}(1),
\]
and using \eqref{AS0}, \eqref{AS1} and \eqref{Smallness0} or  \eqref{Smallness} we conclude.

%

\bigskip

\end{document}